\newtheorem{newthm}{Theorem}
\newtheorem{theorem}{Theorem}[section]
\newtheorem{lemma}[theorem]{Lemma}
\newtheorem{proposition}[theorem]{Proposition}
\newtheorem{corollary}[theorem]{Corollary}
\theoremstyle{remark}
\theoremstyle{plain}
\numberwithin{equation}{section}
\newcommand{\wh}{\widehat}
\def\FFF{{\cal F}}
\def\RRR{{\cal R}}
\def\VVV{{\cal V}}
\def\c{\mathbf c}
\def\v{\mathbf v}
\def\g{\gamma}
\def\R{\mbox{$\mathbb R$}}
\def\C{\mbox{$\mathbb C$}}
\def\D{\mathbb D}
\def\lv{ \left(\begin{matrix} }
 \def\rv{\end{matrix}\right)}
\def\cal{\mathcal}
\def\dw{{\dw}}
\newcommand{\mylabel}[1]{\label{#1}}
\newcommand{\REFEQN}[1] { \begin{equation}\mylabel{#1} }
\newcommand{\ENDEQN}{\end{equation}}
\newcommand{\REFTHM}[1] { \begin{theorem}\mylabel{#1} }
\newcommand{\ENDTHM}{\end{theorem}}
\newcommand{\REFNTH}[1] { \begin{newthm}\mylabel{#1} }
\newcommand{\ENDNTH}{\end{newthm}}
\newcommand{\REFPROP}[1]{\begin{proposition}\mylabel{#1} }
\newcommand{\ENDPROP}{\end{proposition} }
\newcommand{\REFLEM}[1]{\begin{lemma}\mylabel{#1} }
\newcommand{\ENDLEM}{\end{lemma} }
\newcommand{\REFCOR}[1]{\begin{corollary}\mylabel{#1} }
\newcommand{\ENDCOR}{\end{corollary} }
\def\pf{postcritically-finite }
\def\ov{\overline}
\tikzstyle{every picture}=[> = to]
\tikzset{cdlabel/.style={execute at begin node=$\scriptstyle,execute at end node=$}}
\tikzset{implication/.style={double equal sign distance, -implies}}
\tikzset{biimplication/.style={double equal sign distance, implies-implies}}
\title{Monotonicity of entropy for unimodal real quadratic rational maps}
\author{Yan Gao}
\address{Yan Gao, Mathematical School of Sichuan University, 610065, P.R.China.}
\email{gyan@scu.edu.cn}
\date{}
\begin{document}
\maketitle
\begin{abstract}
We show that the topological entropy is monotonic for unimodal interval maps which are obtained from the restriction of quadratic rational maps with real coefficients. This is done by ruling out the existence of certain post-critical curves in the moduli space of aforementioned maps, and confirms a conjecture made in \cite{2019arXiv190103458F} based on experimental evidence.

\end{abstract}
\section{Introduction}
As one of the most important invariants in dynamics, the variation of topological entropy could help understanding the bifurcations in a family of dynamical systems. In the case of polynomial interval maps with real critical points,  \textit{monotonicity of entropy} conjecture, due to Milnor, asserts that the entropy level sets (the \textit{isentropes}) are connected \cite{MR3289915}. This was established first for quadratic polynomials \cite{MR762431,MR970571,MR1351519}, then in the cubic case  \cite{MR1351522,MR1736945}, and finally, in its full generality, in \cite{MR3264762} (and later in \cite{MR3999686} with a different method).

Similarly, given a real rational map $f\in\Bbb{R}(z)$, it is possible to study its \textit{real entropy}
\begin{equation}\label{real entropy}
h_\Bbb{R}(f):=h_{\rm{top}}\left(f|_{\hat{\Bbb{R}}}=\hat{\Bbb{R}}\rightarrow\hat{\Bbb{R}}\right)
\end{equation}
as $f$ varies in families. With some effort, one can define a \textit{real entropy function} $h_\Bbb{R}$ on an appropriate moduli space of real rational maps
\cite{2018arXiv180304082F}. The domain becomes much more complicated compared to the aforementioned setting of polynomial interval maps with real critical points whose space of affine conjugacy classes has a natural parametrization in terms of critical values \cite[Theorem 3.2]{MR1736945}. For $d=2$, the space ${\rm rat}_2^{\tiny{\R}}$ of Möbius  conjugacy classes of real quadratic rational maps is thoroughly studied by Milnor \cite{MR1246482}: It can be naturally identified with $\Bbb{R}^2$; and admits a partition  to \textit{degree} $\pm 2$, \textit{monotonic}, \textit{unimodal} and \textit{bimodal} regions based on the topological type of the map under consideration. By generating entropy contour plots, paper \cite{2019arXiv190103458F} conjectures that the isentropes are connected in the region of unimodal maps and the region of bimodal maps of shape $(-+-)$ \cite[Conjecture  5.1]{2019arXiv190103458F}, whereas they become disconnected in the region of bimodal maps of shape $(+-+)$ \cite[Conjecture 1.4]{2019arXiv190103458F}.
The latter conjecture is the subject of \cite{2019arXiv191004705F} while the former is partially resolved in  the former paper: the function $h_\Bbb{R}$ is monotonic on the part of the moduli space where maps have real critical points and admit three real fixed points; this region consists of the entirety of the $(-+-)$-bimodal region and a half of the unimodal region \cite[Theorem 1.2]{2019arXiv190103458F}. The main goal of this article is to finish the proof of monotonicity in the unimodal region.

\begin{theorem}\label{thm:quadratic}
There are no bone-loops in the unimodal region of the moduli space of real quadratic rational maps. As a consequence, the entropy function is monotone in the unimodal region.
\end{theorem}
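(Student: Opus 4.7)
The plan is to establish the first (combinatorial–geometric) statement, since absence of bone-loops implies connectedness of isentropes by the standard Milnor--Tresser argument already applied in this setting in \cite{2019arXiv190103458F}. I would begin by recalling Milnor's identification $\operatorname{rat}_2^{\Bbb{R}}\cong \Bbb{R}^2$ and isolating the unimodal region together with its boundary strata (degenerations to degree one, appearance of an extra critical point on $\hat{\Bbb{R}}$, etc.). Since \cite[Theorem 1.2]{2019arXiv190103458F} treats the sub-region of the unimodal region where the map has three real fixed points, it suffices to rule out bone-loops in the complementary portion. After fixing a combinatorial type, a candidate bone $\beta$ is a real-algebraic curve cut out by the condition that the orbit of the real critical point $c_1$ realizes a prescribed (pre)periodic itinerary on $\hat{\Bbb{R}}$.

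My central idea is to track the \emph{second} critical point $c_2$ along $\beta$. In the polynomial case there is only one critical point and the kneading invariant alone controls the dynamics, but for quadratic rational maps the second critical point is free and provides an additional combinatorial coordinate. Along $\beta$, the finite postcritical set of $c_1$ cuts $\hat{\Bbb{R}}$ into a Markov-like partition, and the itinerary of the forward orbit of $c_2$ relative to this partition is a well-defined invariant. I propose to show that this itinerary varies strictly monotonically as one traverses any smooth arc of $\beta$, the change occurring precisely when $c_2$ crosses a postcritical point of $c_1$. A closed real-algebraic loop cannot support a strictly monotonic combinatorial invariant, which yields the desired contradiction.

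The main obstacle is establishing monotonicity of the $c_2$-itinerary. Infinitesimally, this is the transversality statement that the map $t\mapsto c_2(t)$ from $\beta$ to $\hat{\Bbb{R}}$ is a local immersion, i.e. that no tangent vector to $\beta$ fixes $c_2$ to first order. At postcritically finite parameters on $\beta$ this reduces to a Thurston-style rigidity argument, namely the nondegeneracy of an Epstein-type intersection pairing on an appropriate finite-dimensional quotient of Teichm\"uller space, adapted here to the real-symmetric setting. The delicate cases --- where $c_2$ itself becomes (pre)periodic, or the two critical orbits collide --- would be handled by showing that the associated walls are themselves transverse to $\beta$ and so meet it only at isolated parameters, across which the itinerary still changes in a consistent direction. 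Piecing together local monotonicity along all of $\beta$ completes the argument and rules out bone-loops, giving the monotonicity of $h_{\Bbb{R}}$ on the unimodal region as an immediate corollary.
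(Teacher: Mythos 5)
Your overall strategy---ruling out bone-loops by tracking the free critical point $c_2$ along a putative loop and deriving a contradiction from a monotone combinatorial invariant---is the same in spirit as the paper's, which follows the curve $\{f^{m}_{\v}(c_2)=c_1\}$ around the loop. The gap is at the crucial step: strict monotonicity of the $c_2$-itinerary. You reduce it to the statement that the relevant evaluation map is a local immersion, i.e.\ to \emph{unsigned} transversality, and propose to prove that by a Thurston/Epstein-type rigidity argument. Unsigned transversality is not enough: it only shows that the parameters where the itinerary changes are isolated and that the walls are crossed transversally there, but the itinerary can perfectly well change and then change back (the point $f^{m}_{\v}(c_2)$ can cross $c_1$ from left to right at one parameter and from right to left at another), and then nothing prevents it from returning to its initial value after a full turn around the loop. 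What the argument actually needs is a \emph{consistent sign}: with respect to a fixed orientation of the bone, every transverse crossing of the wall $f^{m}_{\v}(c_2)=c_1$ occurs in the same direction. This signed statement is exactly the positive transversality of Levin--Shen--van Strien used in the paper: after normalizing by the factors $(g^{q_j-1})'(v_j)$ the Jacobian determinant in \eqref{eq:transversal} is positive, and Proposition \ref{pro:monotone-on-curve}(2) converts this into $\frac{1}{(g^{m-1})'(v_2)}\nabla_{E_g}R_2(g)>0$ at the PCF parameter, with $E_{\v}$ a globally defined orientation of the bone. An Epstein-pairing argument yields nonvanishing of the relevant derivative but carries no sign information, so it cannot close the loop of your argument; the sign is the essential input, supplied in the paper by the lifting property (Proposition \ref{pro:quadratic-family} and Theorem \ref{thm:LSS}).

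Two smaller points. First, you need a PCF parameter on the loop at which to apply transversality; the paper produces one from density of hyperbolicity (if every hyperbolic PCF map lay on a bone-arc, the arc through a hyperbolic parameter inside the Jordan domain bounded by the loop would have to cross the loop, contradicting disjointness of bones). Second, to propagate the sign from one crossing to the next you must also control the sign of $(f^{m-1}_{\v})'(v_2)$ along the subarc between consecutive crossings, which is why the paper arranges that $m$ is minimal with $f^{i}_{\v}(c_2)\neq c_1$ for $i<m$ along the entire loop; your ``delicate cases'' paragraph gestures at such issues but does not resolve them.
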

Here, \textit{bones} are connected components of curves defined by post-critical relations $f^n(c)=c$ in the unimodal region of the moduli space. Similar curves also appear in the treatment of the monotonicity problem for real cubic polynomials \cite{MR1351522,MR1736945}. To prove Theorem \ref{thm:quadratic}, we shall utilize the positive transversality result developed in \cite{MR4043216,MR4115082} to argue that no bone can be a Jordan curve (a \textit{bone-loop}). The monotonicity then follows from \cite[Proposition 6.8]{2019arXiv190103458F}.

\noindent{\bf Acknowledgment.} I truly appreciate the contribution of Khashayar Filom to this paper. He gives  many valuable suggestions and comments during the communication, and generously provides the pictures. Both of the two pictures in this paper  come from \cite{2019arXiv191004705F}.

\section{Positive transversality for real holomorphic families}
In this section, we summarize the positive transversality result developed in \cite{MR4115082}. Our statement is slightly different from that in \cite{MR4115082} because we want to apply their theory to a particular setting.

\subsection{Real holomorphic family parameterized by critical values}\label{sec:holomorphic-family}
We say that a map $G:\VVV\times U\to\C$ is a \emph{holomorphic family parameterized by critical values} if
\begin{enumerate}
\item $\VVV$ is an open connected subset of $\C^r$ and $U$ is an open subset of $\C$;
\item $G:(\v,z)\in \VVV\times U\mapsto G_{\v}(z)\in \C$  is a holomorphic map such that  $G_\v$ has $r$ distinct critical points for every $\v\in\VVV$;
\item each $\v=(v_1,\ldots,v_r)\in\VVV$ is an ordered collection of critical values of $G_\v$, i.e.,
\begin{equation}\label{eq:critical-value}
\v=(v_1,\ldots,v_r)=(G_\v(c_1),\ldots,G_\v(c_r)),
\end{equation}
where $c_1,\ldots,c_r$ are all (distinct) critical points of $G_\v$.
\item the ordered collection $\c=(c_1,\ldots,c_r)$ of critical points of $G_\v, \v\in\VVV$, satisfies $G_\v(\c(\v))=\v$ and depends holomorphically on $\v$;
\end{enumerate}
Furthermore, we call this holomorphic family $G$ \emph{real} if for any  $z\in U$ and $\v\in\VVV$, we have $\ov{z}\in U$, $\ov{\v}:=(\ov{v_1},\ldots,\ov{v_r})\in \VVV$, $\c(\ov{\v})=\ov{\c(\v)}$ and $G_{\ov{\v}}(\ov{z})=\ov{G_{\v}(z)}$.

\subsection{Positive transversal unfolding of critical relations}\label{sec:positive-transversal}
Let us fix a real holomorphic family $G:\VVV\times U\to \C$ parameterized by critical values as above. We call $g$ a \emph{real \pf (PCF) map} in $G_\VVV$ if
\begin{itemize}
\item there exists a $\v_0\in\VVV\cap\R^r$ such that $g=G_{\v_0}$;
\item the orbit of each critical point of $g$ stays in $U$ and is finite.
\end{itemize}
By the definition of $G$, the orbits of all critical points of $g$ lie on $\R$. We denote $$P(g):=\{g^k(c):g'(c)=0\text{ and }k\geq1\}.$$

 Since $g$ is PCF, for each $j=1,\ldots,r$, exactly one of the following critical orbit
relations holds:
\begin{itemize}
\item there exists an integer $q_j>0$ and and $\mu(j)\in\{1,\ldots,r\}$ such that $g^{q_j}(c_j(g))=c_{\mu(j)}(g)$ and $g^k(c_j(g))\not\in{\rm crit}(g)$ for each $1\leq k<q_j$;
\item there exist integers $\ell_j<q_j$ such that $g^{q_j}(c_j(g))=g^{\ell_j}(c_j(g))$ and $g^k(c_j(g))\not\in{\rm crit}(g)$ for each $1\leq k<q_j$.
\end{itemize}
Define the map
\[\RRR:=(R_1,\ldots, R_r)\]
from a neighborhood of $\v_0=\v(g)\in\VVV$ into $\C^r$ such that
\begin{equation}\label{eq:first}
R_j:=G^{q_j-1}_\v(v_j)-c_{\mu(j)}(\v)
\end{equation}
if $j$ satisfies the first critical orbit relation, and
\begin{equation}\label{eq:second}
R_j:=G^{q_j-1}_\v(v_j)-G^{\ell_j-1}_\v(v_j)
\end{equation}
if $j$ satisfies the second critical orbit relation, where $\v=(v_1,\ldots,v_r)$.

We say that the family $G_\VVV$ satisfies the \emph{positive transversal} property at $g$ if
\begin{equation}\label{eq:transversal}
\frac{{\rm det}\large(D\RRR(\v(g))\large)}{\prod_{j=1}^r(g^{q_j-1})'(v_j(g))}>0,
\end{equation}
where $D\RRR(\v(g))$ denote the  Jacobian matrix of $\RRR$ at $g$.

In \cite{MR4115082}, the authors investigate a condition, called `the lifting property', under which the positive transversality property holds.

\subsection{Positive transversality along curves with one free critical point}\label{sec:monotone}
Let $G:\VVV\times U\to\C$ be an $r$-dimensional real holomorphic family parameterized by its critical values. Suppose that for each $j\in\{1,\ldots,r-1\}$, there exists a positive interger $q_j$ such that $G_\v^k(c_j)\in U\setminus {\rm crit}(G_\v)$ for $k=1,\ldots,q_j-1$ and $\v\in\VVV$, where each of $c_1,\ldots,c_{r-1}$ is a critical point of $G_\v$.

Choose $1\leq r_1\leq r-1$. With each $1\leq j\leq r_1$ associate  an integer $\mu(j)\in\{1,\ldots,r\}$, and with each $r_1<j\le r$ associate a pair of integers $1\leq\ell_j<q_j$.  Then the  triples $(j,q_j,\mu(j))$ and $(j,q_j,\ell_j)$ induce $r-1$ critical relations, such as $G_\v^{q_j}(c_j)=c_{\mu(j)}$ if $1\leq j\leq r_1$ and $G^{\ell_j}_\v(c_j)=G^{q_j}_\v(c_j)$ if $r_1<j\leq r-1$.

Define a holomorphic map $\RRR_{r-1}:=(R_1,\ldots,R_{r-1}):\VVV\to \C^{r-1}$ by setting
$$R_j(\v):=G_\v^{q_j-1}(v_j)-c_{\mu(j)}(\v)$$
if $1\leq j\leq r_1$, and
\[R_j(\v):=G_\v^{q_j-1}(v_j)-G_\v^{\ell_j-1}(v_j)\]
if $r_1<j\leq r-1$, where $\v=(v_1,\ldots,v_r)\in \VVV$.
Let $L$ be a connected component of the real locus of the zero set
 $$\{\v\in\VVV:\RRR_{r-1}(\v)=\mathbf{0}\}$$
By \cite[Theorem 3.2]{MR4043216}, $L$ is a smooth  curve, i.e., the $r\times(r-1)$ matrix
\begin{equation}\label{eq:matrix}
A_\v:=\left[\frac{1}{(G_\v^{q_1-1})'(v_1)}\nabla R_1(\v),\ldots, \frac{1}{(G_\v^{q_{r-1}-1})'(v_{r-1})}\nabla R_{r-1}(\v)\right]
\end{equation}
has rank $r-1$ for every $\v\in L$.

For any $\v\in L$, the map $G_\v$ has a unique free critical point $c_r\in U$. Let $\v_0\in L$ such that $g:=G_{\v_0}$  is PCF in $G_\VVV$. We define a map
\[R_r^g:=G^{q_r-1}_\v(v_r)-c_{\mu(r)}(\v)\quad\text{or}\quad R_r^g:=G^{q_r-1}_\v(v_r)-G^{\ell_r-1}_\v(v_r)\]
on $\VVV$ according to the critical orbit relation of $c_r(g)$.

\begin{proposition}[\cite{MR4115082},Theorem 8.1]\label{pro:monotone-on-curve}
For each $\v\in L$, define $E_\v$ to be the unique unit vector in $\R^r$ which is
orthogonal to the range of the matrix $A_\v$ defined in \eqref{eq:matrix} and satisfies
${\rm det}\left(A_\v,E_\v\right)>0$. Then
\begin{enumerate}
\item $E_\v$ is a tangent vector to $L$ at $\v$ and $L\ni \v\mapsto E_\v$ is real analytic. In particular, $E_\v$
defines an orientation on the entire curve $L$ which we will call the \emph{positive direction}.
\item Let $g\in L$ be a PCF map in $G_\VVV$. If $G_\VVV$ has the positive transversality condition \eqref{eq:transversal} at $g$, then
\[\frac{1}{(g^{q_r-1})'(v_r(g))}\nabla _{E_g} R_r(g)>0,\]
where  $\nabla _{E_g} R_r(g)$ is the derivative in the direction of the tangent vector $E_g$ of $L$ at $g$.
\end{enumerate}
\end{proposition}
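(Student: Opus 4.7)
The plan is to reduce both parts of the proposition to a short linear algebra computation, taking as input the structural fact (already cited in the excerpt) that $A_\v$ has rank $r-1$ at every $\v\in L$ by \cite[Theorem 3.2]{MR4043216}.

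For part (1), I first note that because the family $G$ is real and $\v\in L$ is real, each $R_j$ is real-valued on $\VVV\cap\R^r$, so the gradients $\nabla R_j(\v)$ lie in $\R^r$, while the scalars $(G_\v^{q_j-1})'(v_j)$ are nonzero reals by the chain rule together with the standing hypothesis $G_\v^k(c_j)\in U\setminus\mathrm{crit}(G_\v)$ for $1\le k<q_j$. Hence the columns of $A_\v$ span the same real $(r-1)$-dimensional subspace as $\nabla R_1(\v),\ldots,\nabla R_{r-1}(\v)$, and the real tangent line to $L$ at $\v$ (the kernel in $\R^r$ of the real Jacobian of $\RRR_{r-1}$) is exactly its orthogonal complement. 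Consequently the unit vector $E_\v$, characterized by orthogonality to the columns of $A_\v$ and $\det[A_\v,E_\v]>0$, is a unit tangent to $L$. Real analyticity of $\v\mapsto E_\v$ comes from an explicit cofactor formula: setting the $i$-th component of a vector $F_\v$ to $(-1)^{r+i}\det\bigl(A_\v^{(i)}\bigr)$, where $A_\v^{(i)}$ is the $(r-1)\times(r-1)$ minor obtained from $A_\v$ by deleting row $i$, one checks by Laplace expansion that $A_\v^\top F_\v=0$ and $\det[A_\v,F_\v]=\|F_\v\|^2$; since $A_\v$ has rank $r-1$, $F_\v$ is nowhere zero along $L$, depends real analytically on $\v$, and has the correct sign. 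Normalizing to unit length produces $E_\v$, and connectedness of $L$ promotes this into a global orientation.

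For part (2), set $w_j:=\nabla R_j(g)/(g^{q_j-1})'(v_j(g))$ for $j=1,\ldots,r$, so that $A_g=[w_1,\ldots,w_{r-1}]$ and the \emph{positive transversality} condition \eqref{eq:transversal} reads $\det[w_1,\ldots,w_r]>0$. Decompose
\[w_r=u+\alpha E_g,\qquad u\in\mathrm{span}(w_1,\ldots,w_{r-1}),\ \alpha\in\R.\]
Multilinearity of the determinant together with $u\in\mathrm{Range}(A_g)$ yields
\[\det[w_1,\ldots,w_{r-1},w_r]=\alpha\cdot\det[A_g,E_g].\]
The left-hand side is positive by positive transversality, and $\det[A_g,E_g]>0$ by the defining sign convention for $E_g$, so $\alpha>0$. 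Since $E_g$ is a unit vector orthogonal to $u$,
\[\frac{\nabla_{E_g}R_r(g)}{(g^{q_r-1})'(v_r(g))}=E_g\cdot w_r=\alpha>0,\]
which is the desired inequality.

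The genuine content has been packaged into \cite[Theorem 3.2]{MR4043216} (rank of $A_\v$) and the positive transversality hypothesis itself, so the remainder of the argument is a linear algebra bookkeeping. The main point I would check carefully is the global coherence of the orientation on the connected curve $L$: one must verify that the cofactor vector $F_\v$ above never vanishes on $L$ and hence retains a consistent sign. I expect this bookkeeping, rather than any analytic estimate, to be the principal technical obstacle.
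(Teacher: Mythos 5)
Your argument is correct, but note that the paper offers no proof of this proposition to compare against: it is imported verbatim as Theorem~8.1 of \cite{MR4115082}, with all the real content delegated to that reference and to \cite[Theorem 3.2]{MR4043216} for the rank of $A_\v$. Given those two inputs, your linear-algebra derivation is sound: the cofactor vector $F_\v$ with components $(-1)^{r+i}\det(A_\v^{(i)})$ indeed satisfies $A_\v^{\top}F_\v=0$ and $\det[A_\v,F_\v]=\|F_\v\|^2$ by Laplace expansion, and it cannot vanish on $L$ precisely because some $(r-1)\times(r-1)$ minor of $A_\v$ is nonzero there --- so the ``principal technical obstacle'' you flag at the end is already disposed of by the cited rank statement, and the global orientation on the connected curve $L$ follows. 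The decomposition $w_r=u+\alpha E_g$ and the identity $\det[w_1,\ldots,w_r]=\alpha\det[A_g,E_g]$ correctly convert the sign in \eqref{eq:transversal} into the sign of the directional derivative, using that scaling the rows of $D\RRR$ by the nonzero reals $(g^{q_j-1})'(v_j(g))$ (nonzero by the standing hypothesis that the orbits avoid $\mathrm{crit}(G_\v)$ before time $q_j$) turns the quotient in \eqref{eq:transversal} into $\det[w_1,\ldots,w_r]$. The only point worth making explicit is the compatibility built into the setup: the triples $(q_j,\mu(j))$ and $(q_j,\ell_j)$ for $j\le r-1$ defining $L$ must coincide with the critical orbit relations of $g$ used to form $\RRR$ in \eqref{eq:first}--\eqref{eq:second}, so that the first $r-1$ columns of the transversality determinant really are the columns of $A_g$.
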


\subsection{Transversality for real quadratic maps}\label{ex:1}
We consider a normalized quadratic rational family
\[\FFF_{(1,0,\infty)}=\FFF_{Z_*}:=\{a(z+1/z)+b, a\in \C\setminus\{0\},{\rm Re}(b)>0\}.\]
It is easy to check that each map  $f\in \FFF_{Z_*}$ satisfies $f(\infty)=\infty,f(0)=\infty$ and $f'( 1)=0$. Moreover,  different maps in $\FFF_{Z_*}$ are not conformally conjugate. On the other hand, any quadratic rational map without super-attracting fixed points is conformally conjugate to either a map in $\FFF_{Z_*}$ or a map in the family $\{a(z+1/z):a\in\C\setminus\{0\}\}$. The latter family determines the \textit{symmetry locus} in ${\rm rat}_2$: any quadratic rational map with a non-trivial Möbius automorphism is conformally conjugate to a member of this family \cite{MR1246482}. For a map $f(z)=a(z+1/z)+b\in\FFF_{Z_*}$, a simple calculation yields the critical values as
$$\left\{
    \begin{array}{ll}
     v_1=f(-1)=-2a+b,  \\
     v_2=f(1)=2a+b.
    \end{array}
  \right.
$$
It follows that $a=(v_2-v_1)/4, b=(v_1+v_2)/2$; hence the family $\FFF_{Z_*}$ can be parameterized by critical values.

More generally, given any three distinct ordered points $Z=(z_1,z_2,z_3)$ in $\ov{\C}$, we have the normalized family
\[\FFF_{Z}:=\{\beta^{-1}_Z\circ f\circ \beta_Z:f\in \FFF_{Z_*}\},\]
where $\beta_Z$ denotes the unique M\"{o}bius transformation sends $(z_1,z_2,z_3)$ to $(1,0,\infty)$. Then a quadratic rational map $f$ belongs to $\FFF_{Z}$ if and only if it is not in the symmetry locus of ${\rm Rat}_2$ and satisfies that $f(z_3)=z_3,f(z_2)=z_3$ and $z_1$ is a critical point of $f$. Since $\FFF_{Z_*}$, hence all $\FFF_Z$, can be parameterized by critical values,
we usually write $f\in\FFF_{Z}$ as $f_{\v}$ with $\v=(v_1,v_2)$.

The transversality of degree-$d$ rational family at its PCF parameter has been studied  in \cite{MR4115082}. By a similar argument as that in \cite[Proposition C.1]{MR4115082}, we obtain the following result. Its proof is postponed to Appendix.

Suppose the triple $Z=(z_1,z_2,z_3)$ satisfies $\infty\in Z\subset\wh{\R}$.  Let $f\in \FFF_{Z}$ be a PCF map with real coefficients such that  $Z\cap P(f)=\emptyset$. We choose a small neighborhood $U$ of $P(f)$ symmetric with respect to $\R$,  and a small neighborhood $\VVV\subseteq \FFF_{Z}$ of $f$ such that $\v\in\VVV$ if and only if $\ov{\v}\in\VVV$. Then
\[G:(\v,z)\in\VVV\times U\mapsto f_\v(z)\in\C\]
is a real holomorphic family parameterized by critical values, and $G_{\v_0}:=f|_{U}$ is a real PCF map in $G_\VVV$. The map $\RRR$ is defined as in Section \ref{sec:positive-transversal} by the critical relation of $f$.

\begin{proposition}\label{pro:quadratic-family}
The real holomorphic family $G_\VVV$ defined above satisfies the positive transversality condition \eqref{eq:transversal}  at $f$.
\end{proposition}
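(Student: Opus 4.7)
The plan is to follow the blueprint of \cite[Appendix C, Proposition C.1]{MR4115082}, which establishes positive transversality for PCF parameters in a degree-$d$ rational family parametrized by critical values, and to specialize the argument to the quadratic family $\FFF_Z$.

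First I would verify that $G_\VVV$ fits the framework of Section \ref{sec:holomorphic-family}. Writing $f_\v=\beta_Z^{-1}\circ(a(z+1/z)+b)\circ\beta_Z$ with $a=(v_2-v_1)/4$ and $b=(v_1+v_2)/2$, the two critical points depend algebraically on $\v$ (they are the $\beta_Z^{-1}$-images of $\pm 1$); the parametrization is injective; and because $\infty\in Z\subset\hat\R$ the Möbius transformation $\beta_Z$ is real, giving $\overline{f_\v}=f_{\overline\v}$. Thus $G_\VVV$ is indeed a real holomorphic family parametrized by critical values, and $\RRR=(R_1,R_2)$ from Section \ref{sec:positive-transversal} is well-defined near $\v_0=\v(f)$.

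The core step is to establish the \emph{lifting property} of \cite{MR4115082} for $G_\VVV$ at $f$: given a real Beltrami differential $\mu$ on $\hat\C$ supported in $\hat\C\setminus U$ and compatible with the dynamics of $f$ outside $U$, the measurable Riemann mapping theorem yields a quasi-conformal map $\phi$ normalized to fix the triple $Z$ pointwise, and the conjugate $\phi\circ f\circ\phi^{-1}$ then belongs to $\FFF_Z$. This normalization is legitimate precisely because $Z\cap P(f)=\emptyset$, so $\mu$ may be chosen to vanish on a neighborhood of $Z$. Differentiating the resulting real-analytic map $\mu\mapsto\v(\phi\circ f\circ\phi^{-1})$ at $\mu=0$ produces the infinitesimal lifting operator; the required injectivity for the lifting property is inherited from the general argument in \cite{MR4115082}, with the three Möbius normalization conditions accounting for the gap between $\dim_\C{\rm Aut}(\hat\C)=3$ and the dimension of $\FFF_Z$.

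With the lifting property in hand, the main theorem of \cite{MR4115082} represents the ratio
\[
\frac{\det(D\RRR(\v_0))}{\prod_{j=1}^{2}(f^{q_j-1})'(v_j(f))}
\]
as a positive-definite Hermitian pairing of a nonzero real Beltrami differential encoding the critical-orbit relations with itself, yielding \eqref{eq:transversal}. The main obstacle I foresee is tracking signs through the change of normalization from the standard coordinate system in \cite{MR4115082} to $\FFF_Z$: this should reduce to the observation that $\beta_Z\in{\rm PSL}_2(\R)$ is orientation-preserving on $\hat\R$, so the real structure on $\VVV$ induced by complex conjugation agrees with the real structure coming from real Beltrami differentials, and no sign reversal is introduced by the Möbius change of coordinates.
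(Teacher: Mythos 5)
Your proposal follows essentially the same route as the paper's appendix: reduce to the lifting property via the main theorem of \cite{MR4115082}, and verify it by quasiconformally deforming $f$ with the normalization fixing the triple $Z$ pointwise (legitimate because $Z\cap P(f)=\emptyset$), so that the deformed map stays in $\FFF_Z$ and is pinned down by its critical values. The only cosmetic difference is that the paper phrases the lifting property as an iterated sequence of lifts of a holomorphic motion of $P(f)$ (via Bers--Royden extension and Beltrami pullback $f^*\mu$) rather than your single-Beltrami/infinitesimal-operator description, but the substance is identical.
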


\section{Monotonicity of entropy in the unimodal region}

The moduli space ${\rm rat}_2:={\rm Rat}_2/{\rm PSL}_2$ of quadratic rational maps is defined as the space of conformal conjugacy classes of these maps. A rational map $f\in {\rm Rat}_2$ admits three fixed points counted with multiplicity whose multipliers $\mu_1,\mu_2,\mu_3$ satisfy the \emph{fixed point formula}
\[\frac{1}{1-\mu_1}+\frac{1}{1-\mu_2}+\frac{1}{1-\mu_3}=1.\]
By Milnor \cite{MR1246482}, the moduli space ${\rm rat}_2$ can be parameterized by the pairs
$(\sigma_1,\sigma_2)\in\C^2$ with
\[\sigma_1=\mu_1+\mu_2+\mu_3,\quad \sigma_2=\mu_1\mu_2+\mu_2\mu_3+\mu_3\mu_1.\]

The \emph{real moduli space} ${\rm rat}_2(\R)\subseteq {\rm rat}_2$ is defined as the subset of conjugacy classes of quadratic rational maps with real coefficients. It can be shown that
${\rm rat}_2(\R)=\{(\sigma_1,\sigma_2):\sigma_1,\sigma_2\in\R\}$,
and away from the symmetry locus mentioned above, two maps with real coefficients from the same conjugacy class are conjugate via a real Möbius transformation; so the real entropy \eqref{real entropy} descends to a well defined function on the real moduli space \cite{2019arXiv190103458F}.

Let $f\in {\rm Rat}_2$ have real coefficients and two real critical points. Then $f(\hat{\R})$ is an interval bounded by the two critical values, and thus $h_{\tiny{\R}}(f)$ equals to the topological entropy of interval map $f:f(\hat{\R})\to f(\hat{\R})$. We say that $[f]$ belongs to the \emph{unimodal region} of ${\rm rat}_2^{\tiny{\R}}$ if exactly one critical point, called the \emph{essential} critical point, belongs to the interval $f(\hat{\R})$ and the other one, called the \emph{trivial} critical point,
is outside. The region is confined between the polynomial line $f(c_2)=c_2$ $\sigma_1=2$ and the line $f(c_1)=c_2$ $\sigma_1=-6$ \cite[Figure 15]{MR1246482}. Based on numerically generated entropy contour plots, the isentropes appear to be connected in the unimodal region. This is shown in Figure \ref{fig:plot2} which is adapted from \cite{2019arXiv190103458F}:
In the unimodal region, a normal form for the aforementioned interval maps $f:f(\hat{\R})\to f(\hat{\R})$
is given by
\begin{equation}\label{family}
x\mapsto\frac{2\mu x(tx+2)}{\mu^2x^2+(tx+2)^2}:[-1,1]\rightarrow [-1,1]\quad (\mu,t<0,\, t-2< \mu< |t+2|).
\end{equation}
Entropy contour plots in the $(\mu,t)$-plane project to the entropy contour plots in the unimodal region of
${\rm rat}_2^{\tiny{\R}}$ by a finite-to-one map (see \cite[Section 4]{2019arXiv190103458F} for details).

\begin{figure}[ht!]
\center
\includegraphics[width=9.5cm]{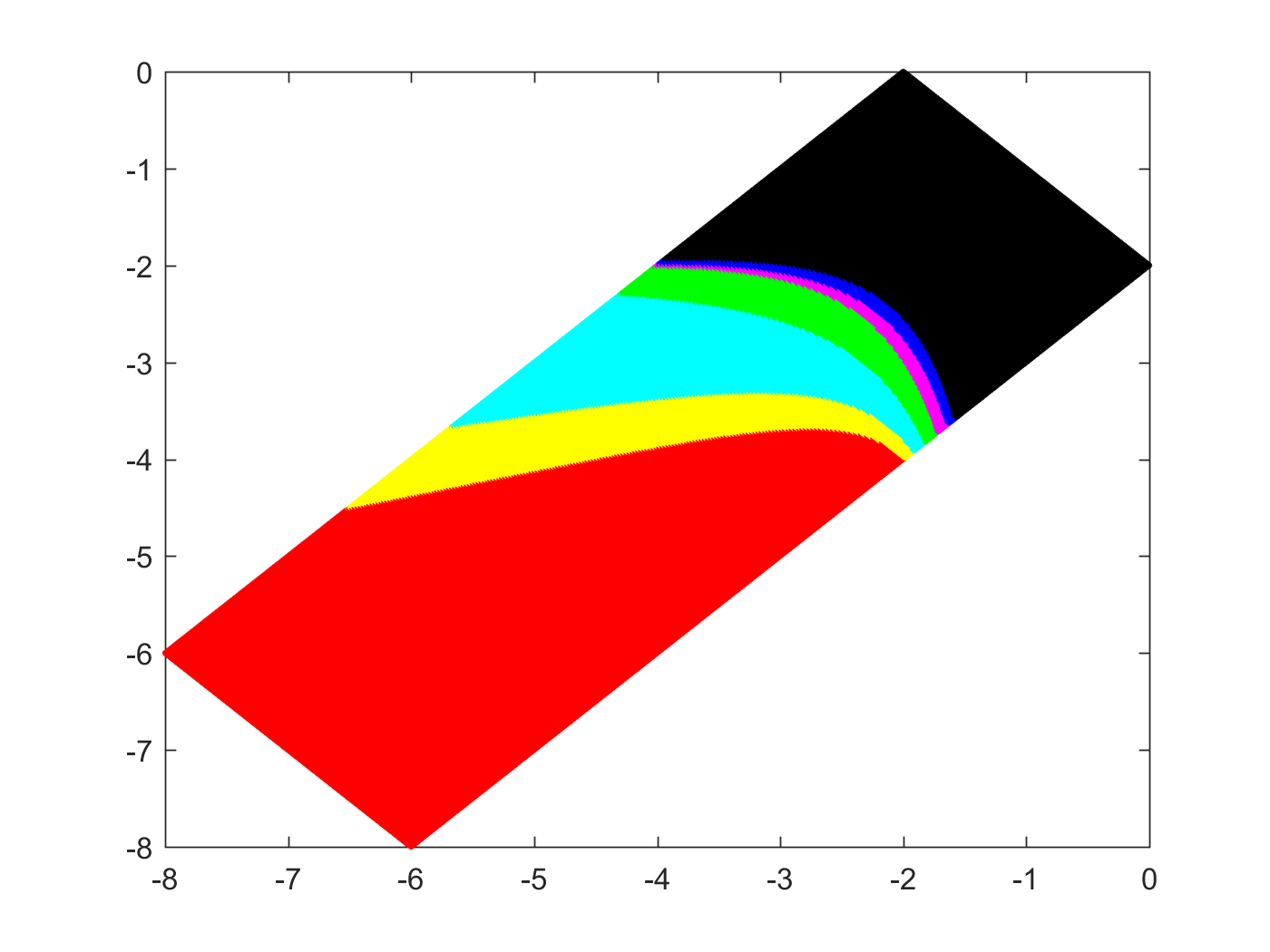}
\caption{An entropy contour plot for the unimodal quadratic rational maps \eqref{family} adapted from \cite{2019arXiv190103458F}. The ordering of colors is black$<$blue$<$magenta$<$green$<$cyan$<$yellow$<$red and they  correspond to the partition $[0, 0.1)$, $[0.1, 0.25)$, $[0.25, 0.4)$, $[0.4, 0.48)$, $[0.48, 0.55)$, $[0.55, 0.65)$ and $[0.65, \log(2)]$ of  $[0,\log(2)\approx 0.7]$. The connectedness of entropy level sets here suggests the monotonicity of entropy in the unimodal region of ${\rm rat}_2(\R)$.}
\label{fig:plot2}
\end{figure}

\begin{figure}[ht!]
\includegraphics[width=15cm, height=9cm]{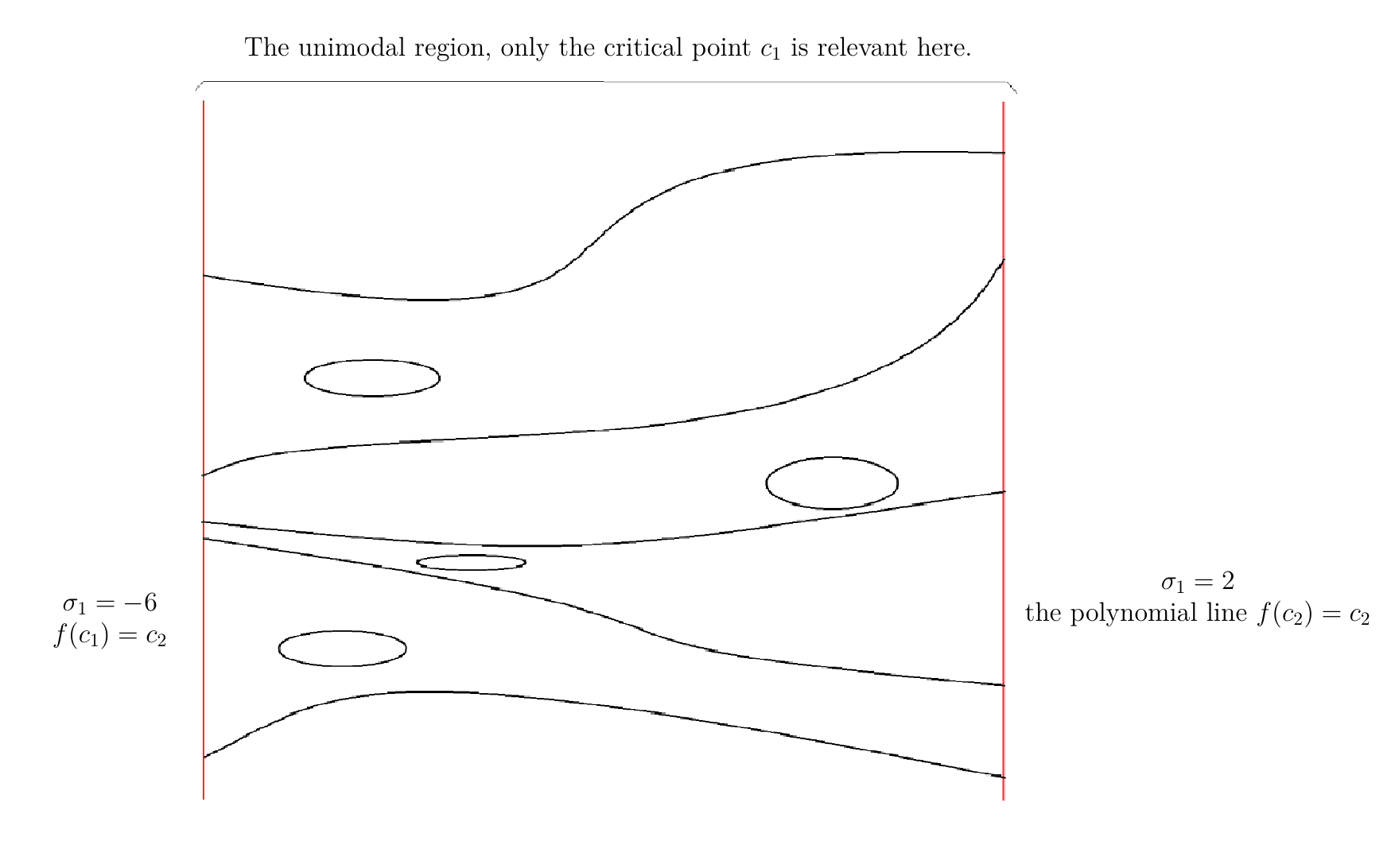}
\caption{A schematic picture of bones (in black)  defined by the post-critical relations $f^{\circ n}(c_1)=c_1$ in the unimodal region of the $(\sigma_1,\sigma_2)$-plane.  They are disjoint one-dimensional submanifolds. There are  bone-arcs connecting PCF quadratic polynomials to PCF quadratic rational maps on the $f(c_1)=c_2$ line. Any other bone in this region should be a Jordan curve, a bone-loop. The picture is adapted from \cite{2019arXiv190103458F}.}
\label{fig:bone}
\end{figure}

If $[f]$ belongs to the (open) unimodal region with $f\in {\rm Rat}_2(\R)$, then $f$ is not in the symmetry locus and has no super-attracting fixed point. It follows that the unimodal region is included in the real locus $\FFF_Z(\R)$ of $\FFF_Z$ defined in subsection \ref{ex:1}.  Therefore, we always consider the unimodal region of ${\rm rat}_2(\R)$  in the normalized family $\FFF_Z(\R)$ for a suitable triple $Z\subset \wh{\R}$.

In the unimodal region, the essential/trivial critical point moves continuously. So we always assume that $c_1(f)$ is the essential critical point.  As indicated in \cite{2019arXiv190103458F}, the real locus of the curves $f^{n}(c_1)=c_1$ consists of finite disjoint one-dimensional closed sub-manifolds of the plane ${\rm Rat}_2(\R)\cong \R^2$.
Following \cite{2019arXiv190103458F}, each component of such real locus is called a \emph{bone}, which is either a Jordan curve, called \emph{bone-loop}, or a closed subset of the plane diffeomorphic to the real line, called \emph{bone-arc}; see Figure \ref{fig:bone}.

\begin{proof}[Proof of Theorem \ref{thm:quadratic}]
We prove this theorem in the normalized family $\FFF_Z(\R)$  with $Z=(\infty,0,1)$ such that $c_2(f)=\infty$ is the trivial critical point for any $f$ in the unimodal region. Then, in this region, we have $f(\wh{\R})\subset \R$.

We first claim that if  bone-loops exist, then some bone-loops contain hyperbolic PCF maps. If it is not true, all hyperbolic PCF maps in the unimodal region are on bone-arcs. Let $\g$ be a bone-loop. By the density of hyperbolicity \cite{MR2342693}, there exists a hyperbolic PCF map $g$ in the region bounded by $\g$. Then the bone-arc passing through $g$ has to intersects $\g$, a contradiction. We then complete the proof of the claim.

We will obtain a contradiction by assuming that $\g$ is a bone-loop in the unimodal region containing a hyperbolic PCF map $f_{\v_0}$. Let $m,n\geq1$ be the minimal numbers for which
$$f_{\v_0}^n(c_1(\v_0))=c_1(\v_0) \text{ and } f^m_{\v_0}(c_2(\v_0))=c_1(\v_0).$$
Since the period of $c_1$ is locally constant, then for every $\v\in\g$ we have $f_\v^n(c_1)=c_1$ and $f_\v^k(c_1)\not=c_1$ for $k=1,\ldots,n-1$. By changing $\v_0$ if necessary, one can assume that
\begin{equation}\label{eq:assumption}
\text{for any $\v\in\g$ and $i<m$, $f_\v^i(c_2)\not=c_1$.}
\end{equation}

Define a map $R_1:= f_\v^{n-1}(v_1)-c_1$. The bone-loop $\g$ is the component  of the zero set of $R_1=0$ that contains $\v_0$.
We define a direction on $\g$ by the tangent vector
\[E_\v:=(f_\v^{n-1})'(v_1)\left(-\frac{\partial R_1}{\partial v_2},\frac{\partial R_1}{\partial v_1}\right),\]
called `positive direction' following Proposition \ref{pro:monotone-on-curve}.

Let $\alpha$ be the component containing $v_0$ of the zero set of
\[R_2(\v):=f_{\v}^{m-1}(v_2)-c_1,\]
Then $\alpha$ is a closed real $1$-dimensional submaniford of $\FFF_Z$ (see for example \cite{MR4043216}). Since $\v_0$ belongs to the unimodal region, the postcritical set of $f_{\v_0}$ avoids $Z$. Thus by Proposition \ref{pro:quadratic-family}, we have the positive transversality condition
\begin{equation}\label{eq:positive}
\frac{1}{(f_{\v_0}^{m-1})'(v_2)(f_{\v_0}^{n-1})'(v_1)}{\rm det}
\left(
\begin{array}{cc}
\dfrac{\partial R_1}{\partial v_1}&\dfrac{\partial R_1}{\partial v_2}\\[10pt]
\dfrac{\partial R_2}{\partial v_1}&\dfrac{\partial R_2}{\partial v_2}
\end{array}
\right)_{\v_0}>0
\end{equation}
at $\v_0$. It follows that the curve $\alpha$ transversally cross $\g$ at $\v_0$.
Since $\g$ is a Jordan curve, we have $\#(\alpha\cap\g)\geq2$. Let $\v_1\in \g\cap\alpha$ be the first point we meet when going along $\g$ starting from $\v_0$ in the positive direction.

By Proposition \ref{pro:monotone-on-curve}, the condition \eqref{eq:positive} implies
\begin{equation*}
\frac{1}{(f_{\v_0}^{m-1})'(v_2)}\nabla_{E_{\v_0}}R_2(\v_0)>0,
\end{equation*}
where $\nabla_{E_{\v_0}}$ stands for the directional derivative of $R_2$ in the direction $E_{\v_0}$. To be definite,
let us assume that
\begin{equation}\label{eq:proof-1}
(f_{\v_0}^{m-1})'(v_2)>0.
\end{equation}
This implies that
\begin{equation}\label{eq:proof-2}
\nabla_{E_{\v_0}}R_2(\v_0)>0.
\end{equation}
We now consider the case at $\v_1$. By \eqref{eq:assumption}, $m$ is the minimal number with $f_{\v_1}^m(c_2)=c_1$. Then, similarly as before, we have
\[\frac{1}{(f_{\v_1}^{m-1})'(v_2)}\nabla_{E_{\v_1}}R_2(\v_1)>0.\]
Due to continuity, \eqref{eq:proof-1} implies $(f_{\v_1}^{m-1})'(v_2)>0$ as otherwise there is a parameter $\v$ at which
$(f_{\v}^{m-1})'(v_2)=0$ contradicting \eqref{eq:assumption}. As a consequence, we get
 \begin{equation}\label{eq:proof-3}
\nabla_{E_{\v_1}}R_2(\v_1)>0.
\end{equation}

 Remember the segment $f_\v(\wh{\R})\subseteq\R$ for all $\v\in\g$.  Notice that the order of $c_1,f_\v(c_1),\ldots,f^{n-1}_{\v}(c_1)$ in $f(\R)$ does not change along $\g$. It then follows from \eqref{eq:proof-2} that $f_\v^m(c_2)$ is on the right of $c_1$ for all $\v$ in the subarc $\g(\v_0,\v_1)$ of $\g$ from $\v_0 $ to $\v_1$ in the positive direction. Therefore, in order that $f^m_\v(c_2)$ hits $c_1$ at $\v_1$, the function $R_2(\v)=f^{m-1}_\v(v_2)-c_1$ has to decrease when $\v$ tends to $\v_1$ along $\g(\v_0,\v_1)$. It means $\nabla_{E_{\v_1}}R_2(\v_1)\leq0$, contradicting \eqref{eq:proof-3}.
\end{proof}

\begin{appendix}
\section{The proof of Proposition \ref{pro:quadratic-family}}
To prove the proposition, we need to check the lifting property stated below.
Given a holomorphic motion $h_\lambda$ of $P(g)$ over $(\D,0)$, we say that
$\hat{h}_\lambda$ is a \emph{lift} of $h_\lambda$ over $\D_\epsilon$  with
respect to $(g,G_\VVV)$ if for all $\lambda\in\D_\epsilon$,
\begin{itemize}
\item $\hat{h}_\lambda(c_j(g))=c_j(\v(\lambda))$ for each $j=1,\ldots,r$ and $c_j(g)\in P(g)$, where
\[\v(\lambda)=(h_\lambda(v_1(g)),\ldots,h_\lambda(v_r(g)));\]
\item $G_{\v(\lambda)}(\hat{h}_\lambda(x))=h_\lambda(g(x))$ for each $x\in P(g)\setminus {\rm crit}(g)$.
\end{itemize}
We say that $G_\VVV$ has the \emph{real lifting property at $g$}  if for each real holomorphic motion $h_\lambda^{(0)}$
of $P(g)$ over $(\D,0)$ there exist $\epsilon>0$ and a sequence of real holomorphic motions $h_\lambda^{(k)},k=1,2,\ldots$
of $P(g)$ over $(\D_\epsilon,0)$ such that for each $k\geq0$,
\begin{itemize}
\item $h_{\lambda}^{(k+1)}$ is a lift of $h_{\lambda}^{(k)}$;
\item there exists $M>0$ such that $|h_\lambda^{(k)}(x)|\leq M$ for all $x\in P(g)$ and all $\lambda\in \D_\epsilon$.
\end{itemize}

\begin{theorem}[Main Theorem in \cite{MR4115082}]\label{thm:LSS}
Let $G_\VVV$ be a real holomorphic family parameterized by critical values and $g$ a real PCF map in $G_\VVV$. Assume that $G_\VVV$ is not a Latt\'{e}s family and satisfies the lifting property at $g$, then $G_\VVV$ has the positive transversality property \eqref{eq:transversal} at $g$.
\end{theorem}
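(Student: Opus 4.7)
The plan is to algebraically identify the normalized Jacobian in \eqref{eq:transversal} with $\det(I - T_g)$, where $T_g$ is an $r\times r$ matrix whose entries are sums of products of reciprocals of derivatives along the critical orbits of $g$. Concretely, when one differentiates the critical relations \eqref{eq:first} and \eqref{eq:second} with respect to $\v$, the chain rule produces factors $(g^{q_j-1})'(v_j)$ and sums of the form $\sum_k (g^{q_j-1-k})'(g^k(v_j))\cdot \partial_{v_i}(g^{k}(v_j))$; the implicit relation $G_\v(c_j(\v))=v_j$ supplies $\partial_{v_i}c_j(\v)=\delta_{ij}/G_\v'(c_j(\v))$. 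After dividing row $j$ by $(g^{q_j-1})'(v_j)$, all these contributions assemble into an expression of the shape $I - T_g$ in which $T_g$ has a direct combinatorial/dynamical meaning: it is the transition matrix that records how a first-order perturbation of one critical value, propagated forward along the orbits, eventually reenters a critical point. The positive transversality statement then reduces to proving that $T_g$ has spectral radius strictly less than $1$, since then the real eigenvalues lie in $(-1,1)$, contributing positive factors $1-\lambda$, and complex eigenvalues come in conjugate pairs contributing $|1-\lambda|^2>0$.

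\textbf{Main step: use the lifting property to rule out spectral radius $\geq 1$.}
Suppose, for contradiction, that $T_g$ admits an eigenvector $\xi\in\C^r$ with eigenvalue $\mu$ of modulus $\geq 1$. Interpret $\xi$ as the first-order velocity at $\lambda=0$ of a holomorphic motion $h^{(0)}_\lambda$ of $P(g)$: prescribe $h^{(0)}_\lambda(v_j(g))=v_j(g)+\lambda\xi_j+O(\lambda^2)$ and propagate along orbits of $g$. The eigenvector equation $T_g\xi=\mu\xi$ says precisely that each successive lift $h^{(k+1)}_\lambda$ supplied by the lifting property has first-order velocity $\mu^{-1}$ times that of $h^{(k)}_\lambda$; since $|\mu|\geq 1$, these velocities do not blow up. The uniform bound $|h^{(k)}_\lambda(x)|\leq M$ in the lifting hypothesis makes $\{h^{(k)}_\lambda\}$ a normal family on $\D_\eps$, so by Montel's theorem we can extract a subsequential limit $h^\infty_\lambda$. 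By construction $h^\infty_\lambda$ is a nontrivial holomorphic motion of $P(g)$ on $\D_\eps$ that is $g$-invariant, meaning $G_{\v(\lambda)}(\hat h^\infty_\lambda(x))=h^\infty_\lambda(g(x))$ for all $x\in P(g)\setminus \mathrm{crit}(g)$.

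\textbf{Conclusion via rigidity; non-Lattès is used here.}
The invariant holomorphic motion $h^\infty_\lambda$ of the finite set $P(g)$ extends, by the $\lambda$-lemma of Mañé-Sad-Sullivan and the standard pullback scheme for PCF maps, to a $g$-invariant holomorphic motion of all of $\wh{\C}$. Differentiating at $\lambda=0$ yields a nonzero $g$-invariant Beltrami differential on $\wh{\C}$. By Thurston rigidity for PCF rational maps, equivalently the McMullen-Sullivan non-existence of invariant line fields on the Julia set, a PCF map admitting such an invariant Beltrami differential must be flexible, and the only flexible PCF examples are Lattès maps. Since $G_\VVV$ is assumed not to be a Lattès family, this is a contradiction, so the spectral radius of $T_g$ is strictly less than $1$, yielding $\det(I-T_g)>0$, which is \eqref{eq:transversal}.

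\textbf{Main obstacle.}
The conceptually crisp steps above are the rigidity argument and the normal families argument; the genuinely hard bookkeeping is in Step~1, namely writing $D\RRR(\v(g))$ as $\det(I-T_g)$ with $T_g$ having an interpretation that delivers the \emph{positive} sign rather than merely non-vanishing. One must carefully distinguish between critical relations of type \eqref{eq:first} and type \eqref{eq:second}, handle the implicit-function derivatives of $c_{\mu(j)}(\v)$ with consistent sign conventions, and track how the real structure of $G_\VVV$ (used both to make sense of $T_g$ as a real matrix and to control the realness of the lifts) propagates through the Thurston-type decomposition. This accounting, rather than any new analytic input, is where the effort of \cite{MR4115082} is concentrated.
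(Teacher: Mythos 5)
First, a point of order: the paper does not prove Theorem \ref{thm:LSS} at all --- it is quoted verbatim as the Main Theorem of \cite{MR4115082} and used as a black box; the only thing proved in this paper is Proposition \ref{pro:quadratic-family}, i.e.\ the verification of the lifting property for the specific quadratic family. So there is no in-paper proof to compare against, and I am measuring your proposal against the argument in the cited reference. At the level of strategy your outline is faithful to that argument: (i) rewrite the normalized Jacobian in \eqref{eq:transversal} as $\det(I-T_g)$ for a lifting operator $T_g$ acting on vectors of first-order velocities of motions of $P(g)$; (ii) use the uniform bound $|h^{(k)}_\lambda(x)|\le M$ in the lifting property, via Cauchy/Schwarz estimates in $\lambda$, to control the iterates of $T_g$; (iii) exclude the dangerous peripheral eigenvalue by producing an invariant Beltrami differential and invoking rigidity together with the non-Latt\`es hypothesis.

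There are, however, two genuine soft spots. The first is that your spectral step points the operator the wrong way: you posit $T_g\xi=\mu\xi$ with $|\mu|\ge 1$ and say each lift scales the velocity by $\mu^{-1}$. Then for $|\mu|>1$ the velocities $\mu^{-k}\xi$ decay, your Montel limit $h^\infty_\lambda$ is the identity to first order, and no contradiction is produced --- so with your convention eigenvalues of modulus $>1$ are not excluded, yet your positivity conclusion needs real eigenvalues in $(-1,1)$. The correct bookkeeping is that the lifting operator itself is the $T_g$ in $\det(I-T_g)$: boundedness of the lifts forces $\sup_k\|T_g^k\|<\infty$, hence spectral radius $\le 1$ with semisimple peripheral spectrum, and only the eigenvalue $1$ requires the rigidity argument (real eigenvalue $-1$ gives the positive factor $2$, and non-real peripheral eigenvalues pair into $|1-\lambda|^2>0$). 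The second, more serious, gap is the one you candidly defer: the entire \emph{positivity} --- as opposed to mere non-vanishing --- of \eqref{eq:transversal} lives in the Step 1 identity. Unless the normalized Jacobian is shown to equal $\det(I-T_g)$ exactly, with both critical-relation types \eqref{eq:first} and \eqref{eq:second} and the implicit derivatives of $c_{\mu(j)}(\v)$ handled with consistent signs (and not merely up to a sign or an uncontrolled real factor), the spectral estimates prove transversality but not its orientation. As it stands the proposal is an accurate road map of the proof in \cite{MR4115082} rather than a complete argument.
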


\begin{proof}[Proof of Proposition \ref{pro:quadratic-family}]
By Theorem \ref{thm:LSS}, we just need to check that the real holomorphic family $G_\VVV$ defined in Subsection \ref{ex:1} has the lifting property at $f$.

Let $h_\lambda^{(0)}$ be an arbitrary real holomorphic motion of $P=P(f)$ over $(\D,0)$. By Bers-Royden \cite{MR857675},
there exists $\epsilon>0$ such that $h_\lambda^{(0)}$, $\lambda\in\D_\epsilon$, extends to a real holomorphic motion of $\ov{\C}$ over $(\D_\epsilon,0)$, which satisfies the normalization property that $h_\lambda^{(0)}(z_1,z_2,z_3)=(z_1,z_2,z_3)$ (since $z_1,z_2,z_3\not\in P$).
Denote $\mu_\lambda^{0}$ the complex dilatation of $h_\lambda^{0}$.

For $k\geq1$, we inductively define  $\mu_\lambda^{(k)}:=f^{*}\mu_{\lambda}^{(k-1)}$, and $h_\lambda^{(k)}$ the unique quasi-conformal map with complex dilatation $\mu_\lambda^{(k)}$ such that $h_\lambda^{(k)}(z_1,z_2,z_3)=(z_1,z_2,z_3)$.
Then by the Measurable Riemann Mapping Theorem, $h_\lambda^{(k)}$ is a real holomorphic motion of $\ov{\C}$ over $(\D_\epsilon,0)$.

For each $k\geq0$, to prove that $h_{\lambda}^{(k+1)}$, restricting to $P$, is a lift of $h_\lambda^{(k)}$, restricting to $P$, with respect to $G_\VVV$, it is enough to show that
\[h_\lambda^{(k)}\circ f\circ (h_\lambda^{(k+1)})^{-1}=f_{(h_\lambda^{(k)}(v_1),h_\lambda^{(k)}(v_2))},\]
for $|\lambda|$ small enough. To see this, note first that for each $\lambda\in\D_\epsilon$, the map
 \begin{equation}\label{eq:lift-Newton}
f_\lambda:=h_\lambda^{(k)}\circ f\circ (h_\lambda^{(k+1)})^{-1}
\end{equation}
is a rational map of degree $2$. Furthermore, according to the normalization of $h_\lambda^{(k)},h_\lambda^{(k+1)}$, we have that $f_\lambda(z_3)=z_3, f_\lambda(z_2)=z_3, f_\lambda'(z_1)=0$, and then $f_\lambda\in \FFF_{Z}$ if $\epsilon$ is sufficiently small. Since the maps in $\FFF_{Z}$ can be parameterized by  critical values,  it follows that $f_\lambda=f_{(h_\lambda^{(k)}(v_1),h_\lambda^{(k)}(v_2))}$.
\end{proof}
\end{appendix}

\bibliographystyle{alpha}
\bibliography{bibcomprehensive}

\end{document}